\renewcommand{\geq}{\geqslant}
\renewcommand{\leq}{\leqslant}
\def\p{\partial}
\def\basic_energy{
\tilde{E}(t) = \int_{\Omega} | \partial_tu |^2 + | \nabla u |^2 dV 
}
\def\modified_energy{
 | \partial_tu_1 |^2+| \nabla u_0 |^2 
}
\def\domain{
\int_{0}^{T}\int_{\Omega'}
}
\def\1st_energy_term{
\int_{\Omega} \partial_t^2 u u_t
}
\def\initial_formula{
\int_0^T \int_{\Omega'} [\partial^2_t + P, Y]u u dV dt
}
\def\boundary_term1{
\int_{\Omega'} \partial_t u u dV\Big|_0^T
}
\def\side_term{
\int_0^T \int_{\partial \Omega'} Yu (\nu^T BB^T) \nabla u dS dt\
}
\def\simp_side_term{
\int_0^T \int_{F_0'} \frac{1}{\sqrt{n}} \partial_{\nu} u(\nu^T BB^T)\nabla u dS_0'dt
}
\def\inside_side_term{
\partial_{\nu} u(\nu^T BB^T)\nabla u 
}
\def\explicit_vector_field{
y_1 \partial_{y_1} + y_2 \partial_{y_2}  + \dots + y_n \partial_{y_n}
}
\def\Yu_case{
Yu\Big|_
}
\def\vertical_normal_vector{
\frac{1}{\sqrt{n}}
\begin{bmatrix}
     1\\ \vdots \\ 1
\end{bmatrix}
}
\def\tan_vector{
\frac{1}{\sqrt{n}}
\begin{bmatrix}
1\\ -1\\0\\ \vdots\\0\\
\end{bmatrix}
}
\theoremstyle{definition}
\newtheorem{definition}{Definition}[section]
\newtheorem{theorem}{Theorem}[section]
\newtheorem{lemma}[theorem]{Lemma}
\numberwithin{equation}{section}
\title{ASYMPTOTIC BOUNDARY OBSERVABILITY FOR THE WAVE EQUATION ON SIMPLICES}
\author{Hans Christianson and Ziqing Lu}
\begin{document}

\begin{abstract}
In this paper, we consider the wave equation on an n-dimensional
simplex with Dirichlet boundary conditions. Our main result is an
asymptotic observability identity from any one face of the
simplex. The novel aspects of the result are that it is a large-time
asymptotic rather than an estimate, and it requires no dynamical
assumptions on the billiard flow. The proof is an adaption of the
techniques from \cite{Chr17,Chr18,CS}, using mainly integrations by parts.
\end{abstract}

\maketitle

\section{Introduction}
In this paper, we study the wave equation $(\partial_t^2 - \Delta) u =
0 $ on an  n-dimensional simplex with Dirichlet boundary conditions. We obtain an asymptotic observability property from any one face of the simplex. This generalizes the result in \cite{CS} from triangles  to simplices in higher dimensions. The proof is similar to that of \cite{CS}. It mainly uses commutators and integration by parts arguments, but involves a coordinate transformation and linear algebra as well. 

The formal statement of the problem is represented by \eqref{intro:1}:
\begin{equation} \label{intro:1}
\begin{cases}
  (\partial_t^2 - \Delta) u = 0 \text{ on } (0,\infty)\times\Omega, \\
  u\Bigr|_{\substack{\partial \Omega}} = 0,\\
  u(0,x_1, x_2, \dots, x_n) = u_0(x_1, x_2, \dots, x_n), \\
  u_t(0,x_1, x_2, \dots, x_n) = u_1(x_1, x_2, \dots, x_n)\\
\end{cases}
\end{equation}
where  $u$ is real-valued and $u_0 \in H_0^1(\Omega) \cap H^3(\Omega)$ and $u_1 \in H_0^1(\Omega) \cap H^2(\Omega)$. Regarding this problem, the main theorem is the following:

\begin{theorem}
\textit{Let $\Omega \subseteq \mathbb{R}^n$ be a simplex with faces $F_0, F_1, F_2, \dots, F_n$ and suppose $u$ solves the wave equation \eqref{intro:1} on $\Omega$. For any finite time $T > 0$, we obtain the following asymptotic observability identity for any one face $F_j$, $0 \leq j \leq n$ of the simplex $\Omega$:
\begin{equation}
\int_0^T \int_{F_j} |\partial_{\nu} u|^2 dS_j dt = \frac{TArea(F_j)}{nVol(\Omega)}\tilde{E}(0)\left(1+\mathcal{O}\left( \frac{1}{T}\right)\right) ,
\end{equation}
where $\partial_{\nu}u$ is the normal derivative on $F_j$ and $dS_j$ is the induced surface measure. Here $\tilde{E}(t)$ is the conserved energy of the solution $u$ to the wave equation, defined by:
\begin{equation}
\basic_energy. \label{energy_initial}
\end{equation}
Remark 1.Observability in this paper means we can observe the initial energy by taking a measurement on one face. }
\end{theorem}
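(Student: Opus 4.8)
The plan is to reduce the problem on a general simplex to the model simplex $\{y_i > 0,\ \sum y_i < 1\}$ (or the "corner" simplex $\{0 < y_1 < y_2 < \dots < y_n < 1\}$, whichever is cleaner) via an affine change of coordinates, track how the Laplacian and the boundary measures transform under it, and then run a Morawetz-type commutator/multiplier argument with the radial vector field $Y = \explicit_vector_field$ — exactly as in \cite{CS} for triangles, but now with the linear algebra of the Jacobian $B$ making the transformed principal symbol $|B^T\nabla|^2$ appear. This is the source of the matrix expressions $\operator$ and $\nu^T BB^T \nabla u$ in the macros.

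The key steps, in order. First, perform the affine map sending $F_j$ to a standard face, rewrite the wave equation as $(\partial_t^2 + P)u = 0$ with $P = -\nabla^T BB^T \nabla$, and record the conserved energy in the new coordinates. Second, compute the commutator $[\partial_t^2 + P, Y]$; for the Euclidean Laplacian the Euler field $Y$ scales $P$ by a constant, so the commutator is essentially $-2P$ plus lower order, which is what makes the identity exact up to $\mathcal{O}(1/T)$ rather than an inequality. Third, apply $\initial_formula$ and integrate by parts in both $t$ and $x$: the $t$-integration produces the boundary-in-time term $\boundary_term1$, which is bounded by the energy and hence is the $\mathcal{O}(1/T)$ error after dividing by $T$; the $x$-integration produces the spatial boundary term $\side_term$. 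Fourth, analyze this boundary term face by face: on every face other than $F_j$ the vector field $Y$ is tangent (because $Y$ vanishes appropriately at the "origin" vertex opposite... ), so $Yu = 0$ there by the Dirichlet condition, and only the face $F_j$ survives, where $Yu$ reduces to a multiple of $\partial_\nu u$, giving $\simp_side_term$. Fifth, evaluate the surviving geometric constants — the factor $\nu^T BB^T \nu$ together with the Jacobian of the surface measure collapses to the ratio $Area(F_j)/(n\,Vol(\Omega))$ after undoing the coordinate change — and combine with the main term $\int_0^T \int_\Omega 2P u\, u\, dV\,dt = 2T \tilde E(0) \cdot(\text{const})$ coming from energy conservation and equipartition.

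The main obstacle I expect is Step 4 together with Step 5: verifying that $Y$ is genuinely tangent to all faces $F_i$ with $i \neq j$ after the affine change of coordinates (this is automatic for the faces through the chosen vertex but must be checked for the remaining ones, and is where the specific geometry of a simplex — as opposed to a general polytope — is used), and then carefully bookkeeping the several competing Jacobian factors ($dV$, $dS_j$, $B B^T$, the normalization $1/\sqrt n$) so that the final constant comes out to exactly $Area(F_j)/(n\,Vol(\Omega))$. A secondary technical point is justifying all the integrations by parts at the required regularity: this is why $u_0 \in H^3$ and $u_1 \in H^2$ are assumed, so that $\partial_\nu u$ is well-defined in $L^2$ on the boundary and the commutator computation is legitimate; one would first prove the identity for smooth data and pass to the limit. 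The equipartition of energy (that the time-averaged kinetic and potential energies each tend to $\tilde E(0)/2$) is elementary here because it also follows from the same multiplier identity applied with $Y$ replaced by a cutoff in time, so it does not constitute a separate difficulty.
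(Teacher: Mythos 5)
Your plan follows essentially the same route as the paper: the affine reduction to the standard simplex with $P=-\nabla^T BB^T\nabla$, the commutator identity with the radial field $Y$ (note it is exactly $[P,Y]=+2P$, with no lower-order terms, rather than $-2P$), the two-fold evaluation of $\int_0^T\int_{\Omega'}[\partial_t^2+P,Y]u\,u\,dV\,dt$, the face-by-face tangency/Dirichlet argument isolating the one face where $Yu=\frac{1}{\sqrt n}\partial_\nu u$, and the Jacobian bookkeeping producing $Area(F_j)/(n\,Vol(\Omega))$. The only cosmetic difference is that the paper gets the main term $TE(0)$ by substituting $2Pu\,u=(P-\partial_t^2)u\,u$ via the equation instead of invoking equipartition, but as you observe this is the same multiplier identity in disguise.
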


\section{History}
The study of observability is based on the prerequisite that waves propagate along straight-line paths in a homogeneous medium. Waves reflect off the boundary obeying the law of reflection, so that the angle of incidence is the same as the reflection angle. 

The idea of observability (or, more precisely, closely related
geometric control) originates from the paper of Rauch-Taylor paper \cite{RT}, where they studied geometric control for the damped wave equation $u_{tt} - \Delta u + a(x) \partial_t u = 0$. The idea is if every ray passes through the damping region where $a >0$, the energy decays exponentially as $E(t) \leq Ce^{-\frac{t}{c}}E(0)$.  For example, the first picture of Figure \ref{fig:01} is not a geometric control while the second one is.

\begin{figure}[H]
\centering
\includegraphics[scale=0.35]{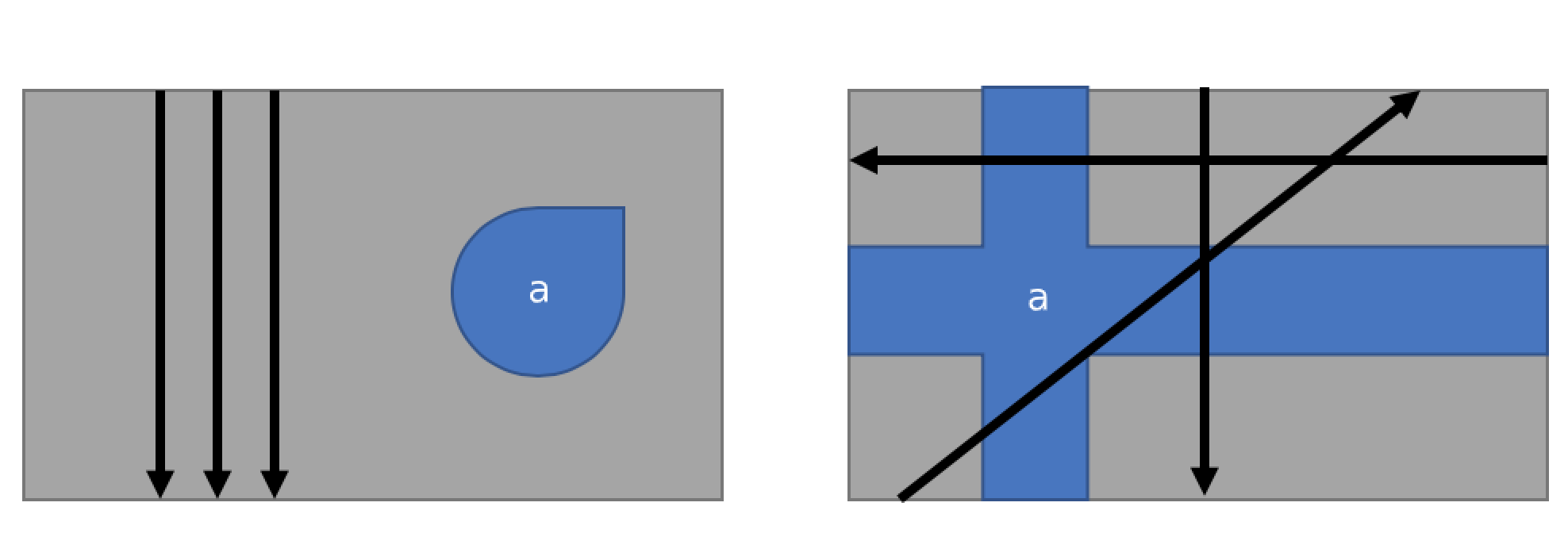}
\caption{Rays Passing Through Subsets Of Domain}
\label{fig:01}
\end{figure}

The closely related idea of observability asks if you can ``see" the initial energy by taking the measurement of a subset of the domain or a subset of the boundary. 
In the work of Bardos-Lebeau-Rauch \cite{BLR}, the observability from a subset of the boundary was studied in depth. The condition for observability, similar to in Rauch-Taylor is that all rays hit the observability region on the boundary transversally, as indicated in Figure \ref{fig:02}.
\begin{figure}[H]
\centering
\includegraphics[scale=0.35]{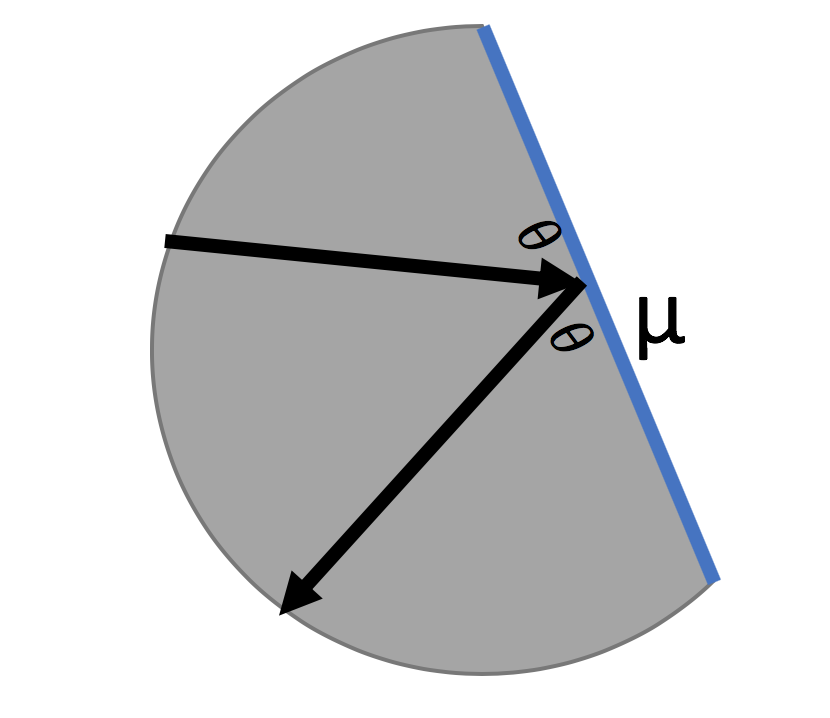}
\caption{Rays Hitting The Control Region On The Boundary}
\label{fig:02}
\end{figure}

In the work of Christianson-Stafford\cite{CS}, an asymptotic observability property from any one side of a triangle is proved. The proof was split into the cases of acute triangles and obtuse triangles, shown by Figure \ref{fig:03}. Waves are assumed to propagate along straight-line paths at unit speed, traveling from the opposite corner to the interested side. The result was obtained with an argument of the method similar to the present paper, by the use of commutator and integration by parts arguments.
\begin{figure}
\centering
\includegraphics[scale=0.25]{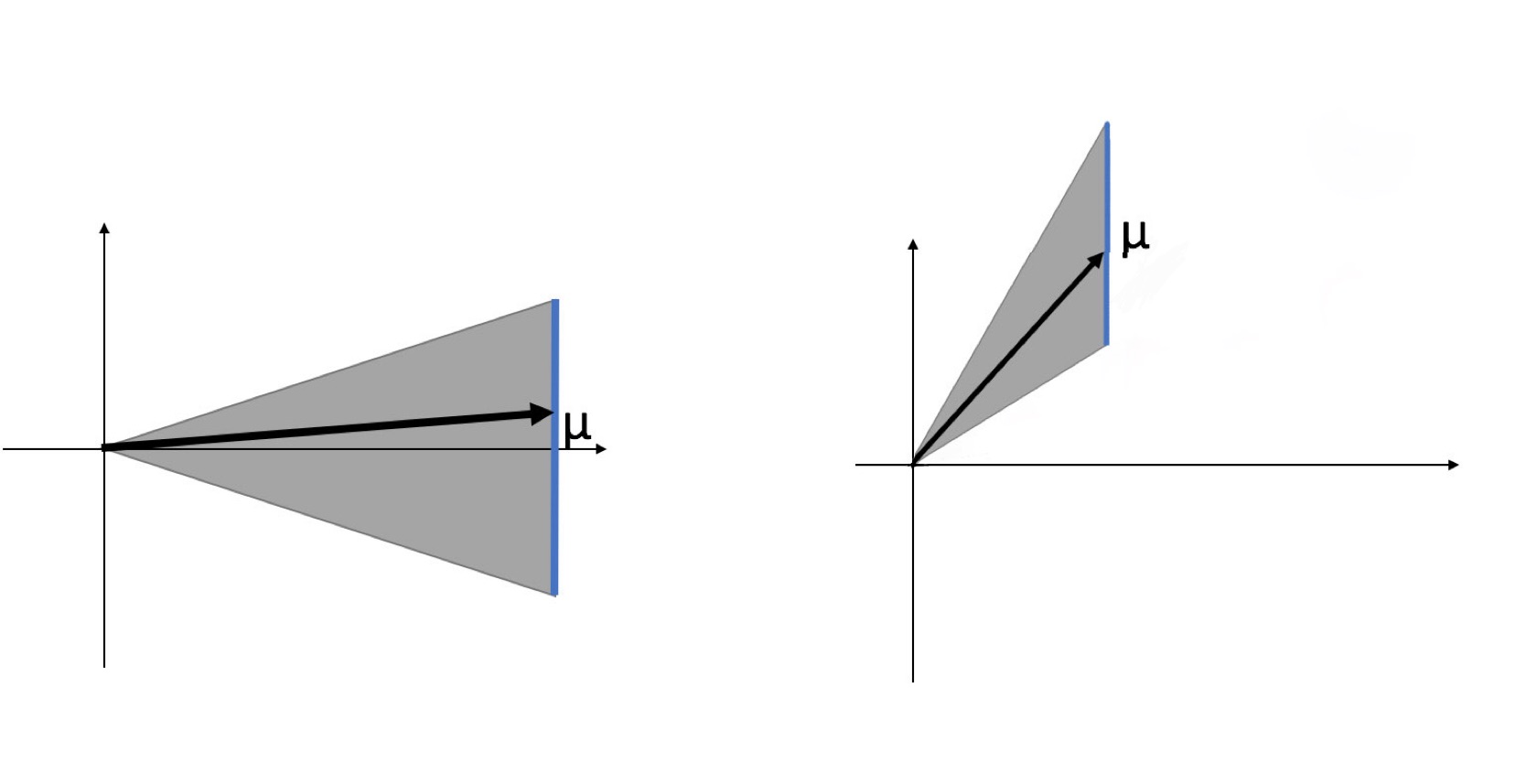}
\caption{Asymptotic Observability On One Side Of Triangles}
\label{fig:03}
\end{figure}
\section{Preliminaries}
This section of preliminaries provides lemmas and definitions required in the main proof.
\begin{lemma}[Conserved Energy] \textit{For the solution $u$ to the wave equation \eqref{intro:1}. The energy is conserved: \label{Lemma:1}
\begin{equation}
\tilde{E}(t) = \tilde{E}(0) .
\end{equation}}
\end{lemma}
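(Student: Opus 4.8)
The plan is the classical energy method: differentiate $\tilde E(t)$ in $t$ and check that the derivative vanishes, the only inputs being the equation $\partial_t^2 u = \Delta u$ and the Dirichlet condition. Before differentiating, I would record that the regularity assumptions $u_0 \in H_0^1(\Omega)\cap H^3(\Omega)$ and $u_1 \in H_0^1(\Omega)\cap H^2(\Omega)$ propagate in time: by the standard well-posedness theory for the Dirichlet wave equation (semigroup theory on $H_0^1(\Omega)\times L^2(\Omega)$ together with elliptic regularity) the solution satisfies $u\in C^0([0,\infty);H^2(\Omega))\cap C^1([0,\infty);H^1(\Omega))$ with $\partial_t^2 u\in C^0([0,\infty);L^2(\Omega))$. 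This is exactly the regularity needed to differentiate under the integral sign in $\tilde E$ and to apply Green's identity in the space variables.

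Granting this, differentiation gives
\[
\frac{d}{dt}\tilde E(t) = 2\int_{\Omega}\big(\partial_t u\,\partial_t^2 u + \nabla u\cdot\nabla\partial_t u\big)\,dV .
\]
Replacing $\partial_t^2 u$ by $\Delta u$ in the first term and integrating by parts,
\[
\int_{\Omega}\partial_t u\,\Delta u\,dV = \int_{\partial\Omega}\partial_t u\,\partial_\nu u\,dS - \int_{\Omega}\nabla\partial_t u\cdot\nabla u\,dV ,
\]
so the interior contributions cancel and $\frac{d}{dt}\tilde E(t) = 2\int_{\partial\Omega}\partial_t u\,\partial_\nu u\,dS$.

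To finish I would argue that this boundary term is identically zero: the Dirichlet condition $u(t,\cdot)\big|_{\partial\Omega}=0$ holds for every $t$, and since $t\mapsto u(t,\cdot)$ is $C^1$ into $H^1(\Omega)$ and the trace operator is continuous on $H^1(\Omega)$, we may differentiate this identity in $t$ to obtain $\partial_t u(t,\cdot)\big|_{\partial\Omega}=0$; hence the integrand on $\partial\Omega$ vanishes. Therefore $\frac{d}{dt}\tilde E(t)\equiv 0$ and $\tilde E(t)=\tilde E(0)$ for all $t$. The one point deserving care — and the only real obstacle here — is the justification of differentiating under the integral and integrating by parts on a non-smooth domain such as a simplex; if one prefers to sidestep the sharp regularity theory, the identity can instead be established first for smooth approximating data, where every step is classical, and then transferred to the general case using density of such data in $\big(H_0^1(\Omega)\cap H^3(\Omega)\big)\times\big(H_0^1(\Omega)\cap H^2(\Omega)\big)$ together with continuous dependence of $u$, and hence of $\tilde E(t)$, on the initial data.
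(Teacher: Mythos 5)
Your proof is correct and follows essentially the same route the paper uses: the paper does not write out a proof of Lemma \ref{Lemma:1} itself, but its proof of the transformed identity \eqref{conserved-energy} is exactly this energy method (differentiate, substitute the equation, integrate by parts, and let the Dirichlet condition kill the boundary term). Your additional remarks on regularity, the vanishing of $\partial_t u$ on $\partial\Omega$, and the density argument supply justification the paper leaves implicit, but do not change the argument.
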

\theoremstyle{definition}
\begin{definition}[Elliptic Operator]
A constant coefficient elliptic operator  $P$ on a domain $\Omega$ in $ \mathbb{R}^n$ is defined by
\begin{equation}
P = -\sum_{i,j=0}^n K_{ij}\partial_{x_i}\partial_{x_j},\label{elliptic-op}
\end{equation}
where $K$ is an $n \times n$ symmetric, positive definite matrix.
\end{definition}

\begin{lemma}[Ellipticity]\label{thm:2.2} \textit{Let $\Omega \subseteq \mathbb{R}^n$ be a simplex. If $u \in H_0^1(\Omega) \cap H^3(\Omega)$, then there exists an constant $C  \text{ such that}$
\begin{equation}
 \| \nabla u \|^2_{L^2(\Omega)} \leq C \langle Pu, u\rangle_{L^2}.
\end{equation}}
\end{lemma}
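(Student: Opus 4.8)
The plan is to prove the stronger identity
\[
\langle Pu,u\rangle_{L^2} \;=\; \int_\Omega (\nabla u)^T K\,(\nabla u)\,dV
\]
by a single integration by parts, and then extract the claimed inequality from the positive-definiteness of $K$. Since $u$ is real-valued and $u\in H^3(\Omega)\subseteq H^2(\Omega)$, each $\partial_{x_i}\partial_{x_j}u\in L^2(\Omega)$, so $Pu\in L^2(\Omega)$ and the pairing $\langle Pu,u\rangle_{L^2}$ is a well-defined real number.

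First I would carry out the integration by parts. A simplex $\Omega\subseteq\mathbb{R}^n$ is a bounded Lipschitz domain, so for any $f,g\in H^1(\Omega)$ the trace formula
\[
\int_\Omega f\,\partial_{x_i}g\,dV \;=\; -\int_\Omega g\,\partial_{x_i}f\,dV \;+\; \int_{\partial\Omega} f\,g\,\nu_i\,dS
\]
holds, where $\nu$ is the outward unit normal. Applying this with $f=\partial_{x_j}u\in H^1(\Omega)$ and $g=u$, and using that $u\in H_0^1(\Omega)$ has zero trace on $\partial\Omega$, the boundary term vanishes and $\int_\Omega u\,\partial_{x_i}\partial_{x_j}u\,dV = -\int_\Omega \partial_{x_i}u\,\partial_{x_j}u\,dV$. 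Summing against $K_{ij}$ and using the definition \eqref{elliptic-op} gives
\[
\langle Pu,u\rangle_{L^2} = -\sum_{i,j} K_{ij}\int_\Omega (\partial_{x_i}\partial_{x_j}u)\,u\,dV = \sum_{i,j} K_{ij}\int_\Omega \partial_{x_i}u\,\partial_{x_j}u\,dV = \int_\Omega (\nabla u)^T K\,(\nabla u)\,dV .
\]

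Now $K$ is symmetric and positive definite, so its smallest eigenvalue $\lambda_{\min}(K)$ is strictly positive and $\xi^T K\xi\geq \lambda_{\min}(K)\,|\xi|^2$ for every $\xi\in\mathbb{R}^n$. Taking $\xi=\nabla u(x)$ and integrating over $\Omega$ yields $\langle Pu,u\rangle_{L^2}\geq \lambda_{\min}(K)\,\|\nabla u\|_{L^2(\Omega)}^2$, which is the assertion with $C=\lambda_{\min}(K)^{-1}$. I do not expect a genuine obstacle here: the whole content is the coercivity of the constant-coefficient Dirichlet form, and the only point needing care—that the simplex has corners and lower-dimensional faces—is absorbed by the fact that a simplex is Lipschitz, so the divergence theorem and the trace formula above remain valid. (Note that the $H^3$ hypothesis is far stronger than what this lemma needs: $H^2\cap H_0^1$, or even $u\in H_0^1$ with $Pu\in L^2$, would suffice; the extra regularity is presumably imposed for use elsewhere in the argument.)
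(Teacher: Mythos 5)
Your proof is correct: integrating by parts once (the boundary term vanishing because $u\in H_0^1(\Omega)$) gives $\langle Pu,u\rangle_{L^2}=\int_\Omega(\nabla u)^TK\,\nabla u\,dV$, and the positive definiteness of $K$ then yields the bound with $C=\lambda_{\min}(K)^{-1}$. The paper states this lemma without proof, so there is nothing to compare against; your argument is the standard one, and your observation that $H^3$ regularity is not needed is also accurate. Note that where the paper actually invokes the lemma, in \eqref{est-I-II}, it really uses only the pointwise coercivity $|\nabla u|^2\leq C\,\langle BB^T\nabla u,\nabla u\rangle = C\,|B^T\nabla u|^2$, which your identity delivers without even needing the integration by parts.
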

The following lemma is a modified version of Theorem 4 published in
the paper \cite{Chr18} by the first author. The modified version can be directly used in the main proof of this paper.
\begin{lemma}[Green's formula for \eqref{elliptic-op}] \label{thm:2.3} \textit{Let $\Omega' \in \mathbb{R}^n$ be the standard simplex and $g\Big|_{\partial \Omega'}=0$. Let $P$ be an elliptic operator. Then for functions $f, g \in C^{\infty} (\Omega)$, we have,
\begin{equation}
\int_{\Omega'} (Pf)g dV = \int_{\Omega'} f(Pg)dV + \int_{\partial\Omega'} f(\nu^TK)\partial gdSdt
\end{equation}
where $\nu$ is the outward unit vector on every face of the simplex $\Omega$.}
\end{lemma}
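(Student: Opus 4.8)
The plan is to prove this directly by integrating by parts twice, that is, by two applications of the divergence theorem on $\Omega'$, using the symmetry of $K$ to recombine the terms and the Dirichlet condition $g|_{\partial\Omega'}=0$ to kill the unwanted boundary contributions. Writing $P = -\sum_{i,j=0}^{n} K_{ij}\partial_{x_i}\partial_{x_j}$ and expanding,
\[
\int_{\Omega'} (Pf)\, g \, dV = -\sum_{i,j} K_{ij} \int_{\Omega'} (\partial_{x_i}\partial_{x_j} f)\, g \, dV .
\]
A first integration by parts in the variable $x_i$ rewrites each summand $-K_{ij}\int_{\Omega'} (\partial_{x_i}\partial_{x_j} f)\, g\, dV$ as $-K_{ij}\int_{\partial\Omega'} (\partial_{x_j} f)\, g\, \nu_i \, dS + K_{ij}\int_{\Omega'} (\partial_{x_j} f)(\partial_{x_i} g)\, dV$. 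Every boundary integral produced at this stage carries the factor $g$, which vanishes on $\partial\Omega'$ by hypothesis, so we are left with the Dirichlet form $\sum_{i,j} K_{ij}\int_{\Omega'} (\partial_{x_j} f)(\partial_{x_i} g)\, dV$.

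A second integration by parts, now in $x_j$, rewrites $K_{ij}\int_{\Omega'} (\partial_{x_j} f)(\partial_{x_i} g)\, dV = K_{ij}\int_{\partial\Omega'} f\, (\partial_{x_i} g)\,\nu_j \, dS - K_{ij}\int_{\Omega'} f\, (\partial_{x_j}\partial_{x_i} g)\, dV$. Summing against $K_{ij}$, the volume terms reassemble into $\int_{\Omega'} f (Pg)\, dV$, and the boundary terms become $\int_{\partial\Omega'} f \sum_{j}\nu_j\big(\sum_i K_{ij}\,\partial_{x_i} g\big)\, dS = \int_{\partial\Omega'} f\,(\nu^T K\nabla g)\, dS$, where the symmetry $K_{ij}=K_{ji}$ is used to identify $\sum_i K_{ij}\partial_{x_i} g$ with the $j$-th component of $K\nabla g$. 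Reading $\partial g$ as $\nabla g$ (and the stray $dt$ in the statement as part of the surface element), this is precisely the asserted formula.

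The only point needing care is that $\Omega'$ is a polytope rather than a domain with smooth boundary, so the divergence theorem is being invoked on a Lipschitz domain; this is standard, since a simplex is convex and hence Lipschitz, its boundary is a finite union of $(n-1)$-dimensional faces on each of which $\nu$ is well defined, and the lower-dimensional skeleton where $\nu$ is undefined has $(n-1)$-dimensional measure zero, so all surface integrals make sense and the integration-by-parts identities above are legitimate for $f,g\in C^{\infty}(\overline{\Omega'})$. I do not expect any real obstacle here; the content of the lemma is really the bookkeeping with $K$ and the observation that only $g|_{\partial\Omega'}=0$ — and not any condition on $f$ — is needed, which is exactly what makes it applicable in the main argument after the linear change of variables that sends $\Omega$ to $\Omega'$ and $-\Delta$ to an operator of the form \eqref{elliptic-op}.
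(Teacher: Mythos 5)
Your argument is correct: the double integration by parts, using the symmetry of $K$ to reassemble $\int_{\Omega'} f(Pg)\,dV$ and the boundary term $\int_{\partial\Omega'} f\,(\nu^T K\nabla g)\,dS$, with only $g|_{\partial\Omega'}=0$ needed to kill the first-stage boundary contributions, is exactly the standard derivation of this identity. The paper itself gives no proof, deferring to Theorem 4 of \cite{Chr18}, so your computation simply supplies the expected argument (and you are right that $\partial g$ should be read as $\nabla g$ and the trailing $dt$ is a typo).
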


This paper inherits the notation that Christianson used in the paper \cite{Chr18} to define higher dimension simplicies. 
\theoremstyle{definition}
\begin{definition}[Simplex]\label{def:1}
Let independent vectors $\vec{p_1}, \vec{p_2}, \dots, \vec{p_n}\in \mathbb{R}^n$ span from the origin, then a simplex $\Omega$ in $\mathbb{R}^n$ is defined as:
\begin{equation}
\Omega =\{ \sum_{i=1}^n c_i \vec{p_i}: \sum_{i=0}^n c_i \leq 1 \ and \  c_i \geq 0, \vec{p_i} \in \mathbb{R}^n \}
\end{equation}
\end{definition}

We denote the face where $c_i = 0, i = 1, \dots, n$ as $F_i$ and the remaining face $F_0$\footnote{By choosing a different corner as the origin, we can get the result for any one of the faces.}. Let matrix A be $
\begin{bmatrix}
| & | & \ & | \\
\vec{p_1} & \vec{p_2} & \dots &\vec{ p_n} \\ 
| & | & \ & | 
\end{bmatrix}
$. Because the column vectors $\vec{p_1}, \vec{p_2}, \dots, \vec{p_n}$ are linearly independent, there exists an inverse matrix of $A$. Denote this inverse matrix by $B$.

In particular, we have standard simplex $\Omega' \in \mathbb{R}^n$.

\theoremstyle{definition}
\begin{definition}[Standard Simplex]\label{def:2}
Let unit vectors $\vec{e_1 }= [1, 0, 0, \dots, 0]^T,\\ \vec{e_2}
=[0,1,0,\dots, 0]^T, \dots, \vec{e_n} = [0,0,0,\dots, 1]^T \in
\mathbb{R}^n$ be the $n$ standard basis of linearly independent orthonormal vectors.  The standard simplex, denoted by $\Omega'$, is defined by all convex combinations of these linearly independent unit vectors:
\begin{equation}
\Omega' =\{ \sum_{i=1}^n d_i \vec{e_i}: \sum_{i=0}^n d_i \leq 1 \ and \  d_i \geq 0, \vec{e_i} \in \mathbb{R}^n \}
\end{equation}
\end{definition}

This standard simplex has $n+1$ faces $F'_0, F'_1, \dots, F'_n$, where $F_i'$ is the face with $d_i = 0, i = 1, 2, \dots, n$ while the remaining face  is $F'_0$. 

The standard rectangular coordinates of the standard simplex $\Omega'$ are denoted as $(y_1, y_2, \dots, y_n)$ in $\mathbb{R}^n$ while the rectangular coordinates of the original simplex $\Omega$ are denoted as $(x_1, x_2, \dots, x_n)$ in this paper.

The following transformation takes the arbitrary simplex $\Omega$ to the standard simplex $\Omega'$.  Using Definition \eqref{def:1} and Definition \eqref{def:2}, let $\vec{y} = [y_1, y_2, \dots, y_n]$ be one vector in the standard simplex $\Omega'$. Then by the following transformation:
\begin{equation}
\vec{x} = A\vec{y},
\end{equation}
we can obtain the corresponding vector $\vec{x}= [x_1, x_2, \dots, x_n]$ in the simplex $\Omega$. By this transformation, we changed the standard basis $\{\vec{e_j}: j=1,\dots,n \}$ into the set of basis $\{ \vec{p_j}: j=1,\dots,n \}$.


By observing the relation $\vec{x} = A\vec{y}$,  we claim that $\nabla_x = B^T \nabla_y$ after the transformation. The proof of this claim is in Appendix \ref{appendix: change variables}.

Since the Laplacian operator is $-\Delta_x = -\nabla^T_x \nabla_x$  on the simplex $\Omega$,  the above claim implies this equation is equivalent to $P  = -(B^T\nabla_y)^T(B^T\nabla_y)=-\nabla^T_y BB^T \nabla_y$ on the standard simplex $\Omega'$. Denote it by $P$.

From now on, $\nabla$ in this paper means $\nabla_y$ by default.

According to \eqref{energy_initial}, the energy of the solution to the wave equation in $y$-coordinates can be defined as :
\begin{equation}
E(t) = \int_{\Omega'} | u_t |^2 + | B^T\nabla u |^2 dV.
\end{equation}
We claim that this energy is also conserved: 
\begin{equation}
E(t) = E(0). \label{conserved-energy}
\end{equation}
\begin{proof}[Proof of (\ref{conserved-energy})]
\begin{equation}
\begin{split}
E(t)' & = \int_{\Omega'} u_{tt}u_t + u_t u_{tt} + (B^T \nabla u_t)(B^T \nabla u) + (B^T \nabla u)(B^T \nabla u_t)dV\\
& = 2\int_{\Omega'} u_{tt} u_t + (B^T \nabla u)(B^T \nabla u_t)dV\\
& = 2\int_{\Omega'} (u_{tt} -\nabla^T B B^T \nabla u)u_t dV\\
& = 0.
\end{split}
\end{equation}
\end{proof}
 
\begin{lemma}\label{Lemma:2}
Consider the vector field $X = \sum_{i=0}^n x_i\partial_{x_i}$ and the second order constant coefficient symmetric operator $P = -\sum_{i,j=1}^n a_{ij}\partial_{x_i} \partial_{x_j}$. Then:
\begin{equation}
[P,X] = 2P
\end{equation}
 \end{lemma}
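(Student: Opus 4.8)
The plan is to compute the commutator $[P,X]$ directly by letting it act on a test function and using the Leibniz rule, since both $P$ and $X$ are concrete differential operators with constant coefficients in the case of $P$. First I would write $PX u = -\sum_{i,j} a_{ij}\partial_{x_i}\partial_{x_j}\bigl(\sum_k x_k \partial_{x_k} u\bigr)$ and expand the derivatives; each $\partial_{x_i}$ or $\partial_{x_j}$ hitting a factor $x_k$ produces a Kronecker delta, which collapses one of the sums, while the terms where both derivatives pass through to $u$ reproduce $X P u$. Keeping careful track of these ``derivative-hits-coefficient'' terms is the only real computation, and it is routine: differentiating $x_k \partial_{x_k} u$ once in $x_j$ gives $\delta_{jk}\partial_{x_k} u + x_k \partial_{x_j}\partial_{x_k} u$, and then differentiating again in $x_i$ produces $\delta_{jk}\partial_{x_i}\partial_{x_k} u + \delta_{ik}\partial_{x_j}\partial_{x_k}u + x_k\partial_{x_i}\partial_{x_j}\partial_{x_k}u$.

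Summing against $-a_{ij}$ and over $i,j,k$, the last term is exactly $X(Pu)$ after relabeling, so it cancels in the commutator. The two delta terms each contribute $-\sum_{i,j}a_{ij}\partial_{x_i}\partial_{x_j}u = Pu$ (using the symmetry $a_{ij}=a_{ji}$ to see the two contributions are equal), giving $[P,X]u = PXu - XPu = 2Pu$. I would then note that this holds for all smooth $u$, hence as an identity of operators $[P,X] = 2P$.

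An alternative, essentially equivalent, route is to observe that $X = \sum_i x_i \partial_{x_i}$ is the Euler (radial scaling) vector field, so conjugating $P$ by the dilation flow $u(x)\mapsto u(\lambda x)$ scales $P$ by $\lambda^2$ because $P$ is homogeneous of degree $2$ in the derivatives; differentiating this scaling relation in $\lambda$ at $\lambda=1$ yields precisely $[P,X]=2P$. This viewpoint makes the factor $2$ transparent — it is the order of $P$ — but for the write-up the direct Leibniz computation is cleanest and self-contained.

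I do not expect any genuine obstacle here: the statement is a standard commutator identity and the proof is a short bookkeeping exercise. The one point to be careful about is the symmetry assumption on $(a_{ij})$, which is what guarantees the two off-diagonal delta-contributions combine into a single copy of $P$ rather than something asymmetric; since the lemma hypotheses include that $P$ is symmetric, this is automatic. The index range ($0$ to $n$ versus $1$ to $n$) is a harmless typo in the statement and does not affect the argument.
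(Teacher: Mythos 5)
Your proposal is correct and follows essentially the same route as the paper: a direct computation of $[P,X]$ via the Leibniz rule, with the paper organizing the bookkeeping by splitting into the diagonal ($i=j$) and off-diagonal ($i\neq j$) terms of $P$ while you track the same ``derivative-hits-$x_k$'' contributions with Kronecker deltas. Your remark that the two delta terms combine into $2P$ is right (indeed it holds even without symmetry of $a_{ij}$, since $\partial_{x_i}\partial_{x_j}=\partial_{x_j}\partial_{x_i}$), and the dilation-flow aside is a nice conceptual check on the factor of $2$.
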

\begin{proof}
When $i=j$, $i = 1, \dots, n$,
\begin{equation}
\begin{split}
&[-a_{ii}\partial_{x_i}^2, x_1\partial_{x_1} + x_2\partial_{x_2} + \dots + x_n \partial_{x_n}]\\
& = \sum_{k=1}^n [-a_{ii}\partial_{x_i}^2, x_k\partial_{x_k}]\\
& = [-a_{ii}\partial_{x_i}^2, x_i\partial_{x_i}] + \sum_{k=1, k \neq i}^n [-a_{ii}\partial_{x_i}^2, x_k\partial_{x_k}]\\
& = -a_{ii}\partial_{x_i}^2(x_i\partial_{x_i}) + x_i\partial_{x_i}(a_{ii}\partial_{x_i}^2)
+ \sum_{k=1, k \neq i}^n( -a_{ii}\partial_{x_i}^2(x_k\partial_{x_k}) + x_k\partial_{x_k}(a_{ii}\partial_{x_i}^2))\\
& = -a_{ii}\partial_{x_i}(\partial_{x_i} + x_i\partial^2_{x_i}) + x_ia_{ii}\partial_{x_i}^3 + 0\\
& = -a_{ii}\partial^2_{x_i} - a_{ii}\partial_{x_i}^2 - x_ia_{ii}\partial^3_{x_i}+ x_ia_{ii}\partial_{x_i}^3\\
& = -2a_{ii}\partial_{x_i}^2\\
\end{split}
\end{equation}
When $i \neq j$, $i,j = 1, \dots, n$,
\begin{equation}
\begin{split}
&[-a_{ij}\partial_{x_i} \partial_{x_j}, x_1\partial_{x_1} + x_2\partial_{x_2} + \dots + x_n\partial_{x_n}]\\
& = [-a_{ij}\partial_{x_i} \partial_{x_j}, x_i\partial_{x_i}] + [-a_{ij}\partial_{x_i} \partial_{x_j}, x_j\partial_{x_j}] + \sum_{k = 1, k \neq i,j}^n [-a_{ij}\partial_{x_i} \partial_{x_j}, x_k\partial_{x_k}]\\
& = [-a_{ij}\partial_{x_i} \partial_{x_j}, x_i\partial_{x_i}] + [-a_{ij}\partial_{x_i} \partial_{x_j}, x_j\partial_{x_j}] + 0\\
& = -a_{ij}\partial_{x_i} \partial_{x_j}(x_i\partial_{x_i}) + x_i\partial_{x_i}(a_{ij}\partial_{x_i} \partial_{x_j}) \\
& \quad - a_{ij}\partial_{x_i} \partial_{x_j}(x_j\partial_{x_j}) + x_j\partial_{x_j}(a_{ij}\partial_{x_i} \partial_{x_j})\\
& = -a_{ij}\partial_{x_j}(\partial_{x_i} + x_i\partial^2_{x_i}) + x_i a_{ij}\partial^2_{x_i}\partial_{x_j}\\
& \quad - a_{ij}\partial_{x_i}(\partial_{x_j} + x_j\partial^2_{x_j}) + x_j a_{ij}\partial_{x_j}^2\partial_{x_i}\\
& = -2a_{ij}\partial_{x_j} \partial_{x_i}\\
\end{split}
\end{equation}
\end{proof}
 
\section{Proof of the Theorem}
Let the vector field be $Y = y_1\partial_{y_1} + y_2\partial_{y_2} + \dots + y_n\partial_{y_n}$ on the standard simplex $\Omega'$. For $P = - \nabla^T B B^T \nabla$, we will compute: 
\begin{equation}
\int_0^T \int_{\Omega'} [\partial_t^2 + P, Y]u u dV, \label{wezy}
\end{equation}
in two different ways. The two approaches are based on these two different ways of dealing with the commutator. One approach starts with using Lemma \ref{Lemma:2} while in the other approach we evaluate the commutator explicitly.

To start our first approach, we could apply Lemma  \ref{Lemma:2} to compute the commutator, because $P$ is an elliptic operator. Then, since $u$ the satisfies wave equation, integration by parts gives: 
\begin{equation}\label{eqn:6}
\begin{split}
\initial_formula &= \domain 2Pu udVdt \\
& = \domain (P - \partial^2_t) u udVdt \\
& = \domain -\nabla^T BB^T \nabla u u dVdt - \domain \partial_t^2 u u dVdt \\
& = \domain -(B^T \nabla)^T B^T \nabla u u dVdt -  \domain \partial_t^2 u u dVdt \\
& = \domain (B^T \nabla) u  (B^T \nabla)u dV dt + \domain \partial_t u \partial_t u dVdt \\
&\quad - \int_{\Omega'} \partial_t u u dV\Big|_{0}^T \\
& = \domain |  B^T \nabla u |^2 dVdt + \int_{0}^{T}\int_{\Omega'} | \partial_t u|^2 dVdt \\
& \quad - \boundary_term1\\
& = \domain |  B^T \nabla u |^2 + | \partial_t u |^2 dVdt  - \boundary_term1 \\
& = TE(0) - \boundary_term1\\
\end{split}
\end{equation}
Notice that at the last step of the previous computation, we used result of \eqref{conserved-energy}. 

We next compute \eqref{wezy} by a different approach. We first evaluate the commutator explicitly and then use integration by parts. The second term generated by the commutator cancels out because of the homogeneous wave equation. Indeed,
\begin{equation}
\begin{split}
\initial_formula &=\domain (\partial_t^2 + P)Yu u - Y (\partial^2_t + P) u u dVdt\\
& = \domain \partial_t^2 Y uu + PYu u dVdt
\end{split}
\end{equation}
After we apply Lemma \ref{thm:2.3} to the second term and use integration by parts twice on the first term, we have:
\begin{equation}\label{result:1}
\begin{split}
 & \quad \ \domain \partial_t^2 Y u u + PYu u dVdt\\
 & = \domain Yu \partial^2_t u dVdt + \int_{\Omega'} \partial_tYu udV\Big|_0^T- \int_{\Omega'} Yu \partial_t u dV \Big|_0^T \\
 & \quad + \domain YuPudVdt 
+ \side_term\\
& = \domain Yu(\partial^2_t + P)u dVdt + \int_{\Omega'} \partial_tYu udV\Big|_0^T  \\
& \quad - \int_{\Omega'} Yu \partial_t u dV \Big|_0^T + \side_term\\
& = \int_{\Omega'} \partial_tYu udV\Big|_0^T  - \int_{\Omega'} Yu \partial_t u dV \Big|_0^T + \side_term
\end{split}
\end{equation}
 where $\nu$ is the outward normal vector to every face, and $dS$ is the reduced differential displacement. 

To simplify the term integrate on the boundary in \eqref{result:1}, we study every face of the simplex by writing out the vector field $Y = \explicit_vector_field$. On face $F_1'$, we have that: 
\begin{equation}
\begin{split}
\Yu_case{F_1'} & = (\explicit_vector_field)u \\
& = (0\cdot \partial_{y_1})u + y_2 \cdot 0 + y_3 \cdot 0 + \dots + y_n \cdot 0 \\
& = 0
\end{split}
\end{equation}
by observing that $y_1 = 0$ on $F_1'$ and that the tangential derivatives $\partial_{y_2}u, \partial_{y_3}u, \dots, \partial_{y_n}u$ of $F_1'$ are all equal to 0 since $u\Big|_{\partial\Omega'} = 0$. Therefore, we could conclude that $Yu\Big|_{F_1'} = 0$.
Similarly, the same result applies on the other $n-1$ faces of the standard simplex:
\begin{equation}
\begin{cases}
\Yu_case{F_1'} = (\explicit_vector_field)u = 0\\
\Yu_case{F_2'} = (\explicit_vector_field)u = 0\\
\vdots\\
\Yu_case{F_n'} = (\explicit_vector_field)u = 0.
\end{cases}
\end{equation}

However, on face $F_0'$, the condition is different because none of the spatial variables is 0 or none of $\partial_{y_2}u, \partial_{y_3}u \dots, \partial_{y_n}u$ are tangential derivatives. We need to find its tangential vectors. 

Notice that the unit normal derivative on this face is $\partial_{\nu} u=\frac{1}{\sqrt{n}}[1,1,\dots,1]^T\nabla u$. Because the tangential derivatives are orthogonal to the normal derivative, we could choose different tangential vectors and take advantage of the fact that they all equal to 0. The first tangential derivative we choose is $\frac{1}{\sqrt{n}}[1, -1, 0, \dots, 0]^T\nabla u $ and it satisfies:
\begin{equation}
\frac{1}{\sqrt{n}}[1,-1,0,\dots,0]^T \nabla u  = 0 \quad \Rightarrow \quad \partial_{y_1}u = \partial_{y_2}u\\
\end{equation}
Similarly, by choosing other tangential derivatives for the face $F_0'$ and by setting them equal to 0, we conclude that:
\begin{equation}
\partial_{y_1}u  = \partial_{y_2}u  = \dots = \partial_{y_n}u
\end{equation}
Therefore, using the conclusion above, the normal vector can be represented as:
\begin{equation}
\begin{split}
\partial_{\nu} u & = \frac{1}{\sqrt{n}}(\partial_{y_1} + \partial_{y_2} + \dots + \partial _{y_n})u \\
& = \frac{1}{\sqrt{n}} ( n \partial_{y_1} u ) \\
& = \sqrt{n} \partial_{y_1}u \\
& = \sqrt{n} \partial_{y_2}u \\
& = \dots \\
& = \sqrt{n} \partial_{y_n}u\\
\end{split}
\end{equation}
which implies that:

\begin{equation}
\begin{split}
& \begin{cases}
\partial_{y_1}u = \frac{1}{\sqrt{n}} \partial_{\nu}u\\
\partial_{y_2} u= \frac{1}{\sqrt{n}} \partial_{\nu}u\\
\vdots\\
\partial_{y_n}u = \frac{1}{\sqrt{n}} \partial_{\nu}u\\
\end{cases}
\end{split}
\end{equation}
Since $y_1 + y_2 +\dots+y_n = 1$ on $F_0'$, 
\begin{equation}
\begin{split}
& \quad \ Yu\Big|_{F_0'} \\
& = (\explicit_vector_field) u \\
& = (y_1 + y_2 + \dots + y_n) \frac{1}{\sqrt{n}} \partial_{\nu}u \\
& = 1 \times \frac{1}{\sqrt{n}} \partial_{\nu} u\\
& =  \frac{1}{\sqrt{n}} \partial_{\nu} u .
\end{split}
\end{equation}
Now the integration on the boundary can be simplified into the form: 
\begin{align}
\simp_side_term,
\end{align}
which only involves the face $F_0'$. As a result, the second approach \eqref{result:1} is simplified to: 
\begin{equation} \label{result:2}
\begin{split}
\initial_formula & = \int_{\Omega'} \partial_tYu udV\Big|_0^T - \int_{\Omega'} Yu \partial t u dV \Big|_0^T\\
&\quad  + \simp_side_term
\end{split}
\end{equation}
We now study the first term of this simplified version \eqref{result:2}, using the integration by parts and the chain rule: 
\begin{equation} \label{result:3}
\begin{split}
\int_{\Omega'} \partial_tYu udV\Big|_0^T & = -\int_{\Omega'} \partial_t u \sum_{j=1}^n \partial_{y_j} (y_j u) dV \Big|_0^T \\
& = -\int_{\Omega'} \partial_{t} u (n u + \sum_{j=1}^n y_j \partial_{y_j} u ) dV\Big|_0^T\\
& = -n\int_{\Omega'} \partial_t u u dV\Big|_0^T- \int_{\Omega'} \partial_t u YudV\Big|_0^T.
\end{split}
\end{equation}
Then we have the second approach summarized as:
\begin{equation} \label{result:4}
\begin{split}
\initial_formula  & = -n\int_{\Omega'} \partial_t u u dV\Big|_0^T - 2\int_{\Omega'} \partial_t u YudV\Big|_0^T. \\
& \quad+ \simp_side_term
\end{split}
\end{equation}

Combining this and \eqref{eqn:6}, and re-organizing terms, we have:
\begin{equation} \label{result:5}
\simp_side_term = TE(0) + (n-1)\int_{\Omega'} \partial_t u u dV\Big|_0^T + 2\int_{\Omega'} \partial_t u YudV\Big|_0^T 
\end{equation}

Now to obtain the observability from face $F_0'$, we are going to analyze the last two terms of \eqref{result:5} to determine whether we could absorb them into initial energy through estimation. 

Firstly, to estimate the third term on the right side of \eqref{result:5}, for some fixed time $t_0$, we use Cauchy's inequality and triangle equality to obtain:
\begin{equation} \label{esti-I-I}
\begin{split}
\Big| \int_{\Omega'} \partial_t u YudV\Big|_{t_0}\Big| & \leq C\int_{\Omega'} | \partial_t u |^2dV \Big|_{t_0}  + C\int_{\Omega'}(\sum_{j=1}^n | y_j \partial_{y_j} u |)^2  dV \Big|_{t_0}\\
& \leq C\int_{\Omega'} | \partial_t u |^2dV \Big|_{t_0} + C \int_{\Omega'}(\sum_{j=1}^n | \partial_{y_j} u |)^2 dV \Big|_{t_0}\\
& \leq C\int_{\Omega'} | \partial_t u |^2dV \Big|_{t_0}  + C \int_{\Omega'} (\sum_{i=1} ^n | \partial_{y_i}u |)( \sum_{k=1}^n | \partial_{y_k}u |) dV \Big|_{t_0}\\
& = C\int_{\Omega'} | \partial_t u |^2dV \Big|_{t_0} + C \int_{\Omega'} \sum_{i=1} ^n \sum_{k=1}^n( | \partial_{y_i}u || \partial_{y_k}u | )dV\Big|_{t_0}\\
& \leq C\int_{\Omega'} | \partial_t u |^2dV \Big|_{t_0} + C\int_{\Omega'}(| \partial_{y_1}u |^2 + | \partial_{y_2}u |^2 + \dots + | \partial_{y_n}u |^2 ) dV \Big|_{t_0}\\
& \leq C \int_{\Omega'} | \partial_t u |^2+ | \nabla u |^2 dV \Big|_{t_0}\\
\end{split}
\end{equation}
Note that every coefficient $C$ changes from line to line but they are independent from time variable $t_0$. By applying the Lemma \ref{thm:2.2}, which can be done since $B^TB$ is positive definite, we have:
\begin{equation}\label{est-I-II}
\begin{split}
\int_{\Omega'} | \partial_t u |^2+ | \nabla u |^2 dV \Big|_{t_0} & \leq C\int_{\Omega'} | \partial_t u |^2+ \langle BB^T\nabla u, \nabla u \rangle dV \Big|_{t_0}\\
& \leq C\int_{\Omega'}  | \partial_t u |^2 + | B^T \nabla u |^2 dV \Big|_{t_0} \\
& = CE(0)
\end{split}
\end{equation}
Thus, combining \eqref{esti-I-I} and \eqref{est-I-II} gives :
\begin{equation} \label{est-I-III}
\begin{split}
\Big| \int_{\Omega'} \partial_t u YudV\Big|_0^T  \Big|& \leq \Big| \int_{\Omega'} \partial_t u YudV\Big|_{t = T} \Big| + \Big| \int_{\Omega'} \partial_t u YudV\Big|_{t = 0} \Big| \\
 & \leq CE(0).\\
 \end{split}
\end{equation}

Similarly, we perform another estimation for the second term of
\eqref{result:5} by using the Cauchy inequality and the Poincaré
inequality. Again, the coefficient $C$ changes but does not depend on
the time variable $t$.  Fixing a time $t = t_0$, we have 
\begin{equation}\label{est-II-I}
\begin{split}
 \left| \int_{\Omega'} \partial_t u u dV\Big|_{t_0} \right|  & \leq C\int_{\Omega'} | \partial_t u |^2dV \Big|_{t_0}  + C\int_{\Omega'}| u |^2 dV \Big|_{t_0}\\
& \leq (C\int_{\Omega'} | \partial_t u |^2dV \Big|_{t_0}  + C\int_{\Omega'} | \nabla u |^2 dV \Big|_{t_0}) \\
& \leq C\int_{\Omega'}  | \partial_t u |^2 +   \langle BB^T\nabla u, \nabla u \rangle dV \Big|_{t_0} \\
& \leq C\int_{\Omega'}  | \partial_t u |^2 + | B^T \nabla u |^2 dV \Big|_{t_0}\\
& = CE(0)
\end{split}
\end{equation}
by conservation of energy.  Applying this with $t_0 = 0$ and $t_0 =
T$, we have
\[
\left| \int_{\Omega'} \p_t u u dV\Big|_0^T \right| \leq C E(0).
\]
Therefore combining \eqref{est-II-I} and \eqref{est-I-III} into \eqref{result:5} yields:
\begin{equation} \label{con: 1}
\simp_side_term = TE(0) + \mathcal{O}(1)E(0)
\end{equation}

 To obtain the observability on face of the original simplex $\Omega$, we make the following transformation from the standard simplex $\Omega'$ back to the original simplex $\Omega$. 
 
Starting from the right side of \eqref{con: 1}, we first transform the energy back to the original simplex, by using the results introduced in \ref{V:1} and \ref{V:2} in Appendix \ref{appendix: Determinant and Volume}, $dy = \frac{1}{\det(A)} dx$ and $\det(A) = n!Vol(\Omega)$. Therefore,
 \begin{equation} \label{con:2}
 \begin{split}
TE(0) + \mathcal{O}(1)E(0)& = (T +\mathcal{O}(1))\int_{\Omega'} (| \partial_t u_1 |^2 + | B^T \nabla u_0 |^2) dV \\
 & = (T +\mathcal{O}(1)) \int_{\Omega'} (| \partial_t u_1 |^2 + | B^T \nabla u_0 |^2)dy_1 dy_2 \dots dy_n\\
 & = \frac{T +\mathcal{O}(1)}{det(A)} \int_{\Omega} \modified_energy dx_1 dx_2 \dots dx_n\\
 & = \frac{T +\mathcal{O}(1)}{n!Vol(\Omega)} \int_{\Omega} \modified_energy dx_1 dx_2 \dots dx_n \\
 & = \frac{T +\mathcal{O}(1)}{n!Vol(\Omega)} \tilde{E}(0)\\
 \end{split}
 \end{equation}
On the left side of \eqref{con: 1}, to transform from face $F_0'$ of standard simplex $\Omega'$ back to the face $F_0$ of original simplex $\Omega$, we first change the graph coordinate $dS_0'$ back to the rectangular coordinate:
\begin{equation}
\begin{split}
 F_0' & = \{y_n = 1 - y_1 - y_2 - \dots - y_{n-1}\} \\
\Rightarrow dS_0' & = (1^2 + (-1)^2 + \dots + (-1)^2)^{\frac{1}{2}} dy_1 dy_2 \dots dy_{n-1} \\
& = \sqrt{n} dy_1 dy_2 \dots dy_{n-1}. \\
\end{split}
\end{equation}
Then,  the left side of \eqref{con: 1} can be written as:
\begin{equation} \label{con:3}
\begin{split}
\simp_side_term & = \int_0^T \int_{\Omega'_{n-1}}  \frac{\sqrt{n}}{\sqrt{n}} \inside_side_term  dy_1 dy_2 \dots dy_{n-1} dt\\
& =  \int_0^T \int_{\Omega'_{n-1}} \inside_side_term dy_1 dy_2 \dots dy_{n-1} dt \\
& = \frac{1}{(n-1)!Area(F_0)}\int_0^T \int_{F_0} \partial_{\nu}u(\nu^T)\nabla u dS_0 dt \\
&  = \frac{1}{(n-1)!Area(F_0)}\int_0^T \int_{F_0} (\partial_{\nu}u)(\partial_{\nu} \bar{u }) dS_0 dt \\
& = \frac{1}{(n-1)!Area(F_0)}\int_0^T  |\partial_{\nu} u|^2 dS_0dt.
\end{split}
\end{equation}

By equating \eqref{con:2} and \eqref{con:3} through \eqref{con: 1}, we could get our final conclusion of the observability from one face of the original simplex $\Omega \subseteq \mathbb{R}^n$:
\begin{equation} \label{con:final}
\begin{split}
\int_0^T \int_{F_0} |\partial_{\nu}u  |^2dS_0 dt & = \frac{(n-1)!Area(F_0)}{n!Vol(\Omega)} E(0)(T +\mathcal{O}(1))\\
& = \frac{TArea(F_0)}{nVol(\Omega)}E(0)\left(1+\mathcal{O}\left( \frac{1}{T}\right)\right).
\end{split}
\end{equation}
This finishes the proof of Theorem 1.1.

\appendix
\section{change variables}
\label{appendix: change variables}
 Let $A = 
\begin{bmatrix}
a_{11} & a_{12} & \dots & a_{1n}\\
a_{21} & a_{22} & \dots & a_{2n}\\
\vdots\\
a_{n1} & a_{n2} & \dots & a_{nn}
\end{bmatrix}
$. Based on the relation, we have $v(x) = v(Ay)$ where $v$ is a smooth
function.

According to chain rule, we know that,
\begin{equation}
\begin{split}
\partial_{y_j}v(Ay) & = \partial_{y_j}v
\begin{bmatrix}
a_{11}y_1 + a_{12}y_2  + \dots + a_{1n}y_n\\
a_{21}y_1 + a_{22}y_2  + \dots + a_{2n}y_n\\
\vdots\\
a_{n1}y_1 + a_{n2}y_2  + \dots + a_{nn}y_n
\end{bmatrix}\\
& = v_{x_1}a_{1j} + v_{x_2}a_{2j} + \dots + v_{x_n}a_{nj}\\
& = \sum_{k = 1}^n v_{x_k}(Ay) a_{kj}\\
& = \nabla_x^T v \Big|_{x = Ay}\begin{bmatrix}
a_{1j}\\a_{2j}\\\vdots\\a_{nj}
\end{bmatrix} , j = 1, 2, \dots, n.\\
\end{split}
\end{equation}
Then we have
\begin{equation}
\begin{split}
\nabla_y (v(Ay)) = \begin{bmatrix}
\partial_{y_1}v(Ay)\\\partial_{y_2}v(Ay)\\ \vdots \\ \partial_{y_n}v(Ay)
\end{bmatrix} & = \begin{bmatrix}
\nabla_x^T v \Big|_{x = Ay}\begin{bmatrix}
a_{11}\\a_{21}\\\vdots\\a_{n1}
\end{bmatrix} \\ 
\nabla_x^T v \Big|_{x = Ay}\begin{bmatrix}
a_{12}\\a_{22}\\\vdots\\a_{n2}
\end{bmatrix} \\
\vdots\\
\nabla_x^T v \Big|_{x = Ay}\begin{bmatrix}
a_{1n}\\a_{2n}\\\vdots\\a_{nn}
\end{bmatrix} 
\end{bmatrix}\\
& = A^T \nabla_x v(Ay)
\end{split}
\end{equation}
Therefore, $\nabla_x = (A^{-1})^T\nabla_y = B^T\nabla_y$. 

\section{Simplex Volume }
\label{appendix:volume}
We used the fact that the volume of a $n$-dimensional standard simplex is $\frac{1}{n!}$ in the main proof. The proof by induction is presented as following:
\begin{proof}
When $n=2$, the standard simplex is spanned by two vectors $\vec{v_1} = [0,1]$ and $\vec{v_2} = [1,0]$. We have its area equals to $\frac{1}{2} = \frac{1}{2!}$. 

Given a standard simplex $S \in \mathbb{R}^{n-1}$, assume $Vol(S) = \frac{1}{(n-1)!}$. Then for the standard simplex $T \in \mathbb{R}^n$ with rectangular coordinates $[t_1, t_2, \dots, t_n]$, we have this relation satisfied:
\begin{equation}
t_1 + t_2 + \dots + t_n \leq 1
\end{equation}
Assume $t_n = k$ for some constant $0\leq k \leq 1$, then: $t_1 + t_2 + \dots + t_{n-1} \leq 1-k$. The volume of this simplex $S' \in \mathbb{R}^{n-1}$ is:
\begin{equation} \label{AP: 1}
\int_{t_1 = 0}^{1-k} \int_{t_2 = 0}^{(1-k)-t_1} \dots \int_{t_{n-1} = 0}^{(1-k)-t_1-t_2-\dots - t_{n-2}} d_{t_{n-1}} \dots d{t_2} d{t_1}
\end{equation}
To use the method of integration by substitution, for each $j$ from $1$ to $n-1$, let $s_j = \frac{t_j}{1-k}$ and therefore $(1-k)ds_j = dt_j $. Regarding the bounds of the integral, all $t_j$s can be substituted by $(1-k)s_j$. In particular, we have every $s_j$ bounded by:
\begin{equation}
\begin{split}
&\Rightarrow 0 \leq (1-k)s_j \leq (1-k) - (1-k)s_1 - (1-k)s_2 - \dots - (1-k)s_{j-1} \\
& \Rightarrow 0\leq s_j \leq 1 - s_1 - s_2 - \dots - s_{j-1}
\end{split}
\end{equation}
Because we assumed that the volume of the standard simplex $S$ is $\frac{1}{(n-1)!}$, the volume of this (n-1)-dimensional simplex $S'$ can be simplified to the form of:
\begin{equation}
\begin{split}
(1-k)^{n-1}\int_{s_1=0}^{1} \int_{s_2 = 0}^{1-s_1} \dots \int_{s_{n-1}=0}^{1-s_1-s_2-\dots-s_{n-2}} ds_{n-1} \dots ds_2 ds_1 = \frac{(1-k)^{n-1}}{(n-1)!}.
\end{split}
\end{equation}

After integrating by variable $k$ from 0 to 1, we get the volume of the standard simplex $T \in \mathbb{R}^n$: 
\begin{equation}
\int_0^1 \frac{(1-k)^{n-1}}{(n-1)!} dk= \frac{1}{n!}.
\end{equation}
\end{proof}
\section{Determinant and Volume }
\label{appendix: Determinant and Volume}
Here we use the same notation indicated in the main proof. The rectangular coordinate of the original simplex, $\vec{x}$ and that of the standard simplex $\Omega'$, $\vec{y}$ is related by:
\begin{equation}
\vec{x} = A\vec{y}
\end{equation}
where $A$ is the matrix with its columns equal to vectors $\vec{p_1},\vec{ p_2}, \dots, \vec{p_n}$.

Their derivatives satisfies:
\begin{equation} \label{V:1}
dx_1 dx_2 \dots dx_n = det(A) dy_1 dy_2 \dots dy_n
\end{equation}
where $det(A)$ denotes the determinant of the Jacobian matrix. 
Furthermore, by the conclusion of Appendix \ref{appendix:volume}, we know that the volume of the standard simplex $\Omega'$ is $\frac{1}{n!}$ and thus we have:
\begin{equation}\label{V:2}
\begin{split}
&\frac{1}{det(A)} \int_{\Omega} dx_1dx_1\dots dx_n  = \int_{\Omega'} dy_1dy_2 \dots dy_n\\
&\Rightarrow \frac{1}{n!} = \frac{1}{det(A)}Vol(\Omega)\\
&\Rightarrow det(A) = n!Vol(\Omega)\\
\end{split}
\end{equation}

\bibliographystyle{alpha}
\bibliography{2020_Lu}
\end{document}